\theoremstyle{plain} %text of this environment is typesetted in italics
\newtheorem{theorem}{\indent\bf Theorem}[section]
\newtheorem{proposition}[theorem]{\indent\bf Proposition}
\newtheorem{conjecture}[theorem]{\indent\bf Conjecture}
\theoremstyle{definition} %text of this environment is typesetted in roman letters
\newtheorem{definition}[theorem]{\indent\bf Definition}
\newcommand{\dbar}{\overline{\partial}}
\newcommand{\R}{\mathbb{R}}
\newcommand{\C}{\mathbb{C}}
\newcommand{\B}{\mathbb{B}}
\newcommand{\Rea}{\mathrm{Re}}
\begin{document}
\pagestyle{plain}
\thispagestyle{plain}

\title[Note on characterizing pluriharmonic functions]
{A note on characterizing pluriharmonic functions via the Ohsawa--Takegoshi extension theorem}

\author[T. INAYAMA]{Takahiro INAYAMA}
\address{Department of Mathematics\\
Faculty of Science and Technology\\
Tokyo University of Science\\
2641 Yamazaki, Noda\\
Chiba, 278-8510\\
Japan
}
\email{inayama\_takahiro@rs.tus.ac.jp}
\email{inayama570@gmail.com}
\subjclass[2020]{32A10, 32A36}
\keywords{ %key words and phrases
Ohsawa--Takegoshi extension theorem, $L^2$-extension, plurisubharmonic function, pluriharmonic function, $L^2$-extension index.
}
%\date{\today}

%%%%%%%%%%%%%%%%%%%%%%%%%%%%%%%%%%%%%%%%%%%%%%%%%%%%%%
\begin{abstract}
 For a continuous function, we prove that the function is pluriharmonic if and only if the equality part of the optimal Ohsawa--Takegoshi $L^2$-extension theorem is satisfied with respect to the metric having the function as a weight.  
 %the function satisfies the equality part of the optimal Ohsawa--Takegoshi $L^2$-extension theorem. 
 This partially resolves the conjecture proposed by the author. 
\end{abstract}

%%%%%%%%%%%%%%%%%%%%%%%%%%%%%%%%%%%%%%%%%%%%%%%%%%%%%%

\maketitle
\setcounter{tocdepth}{2}
%\tableofcontents
%%%%%%%%%%%%%%%%%%%%%%%%%%%%%%%%%%%%%%%%%%%%%%%%%%%%%%

\section{Introduction}\label{sec:intro}

On a one-dimensional complex domain, as is well known, a subharmonic function is characterized by the mean value inequality, and when the equality of the inequality holds, the function becomes a harmonic function. Subharmonic functions play an important role in complex analysis and geometry, and are also used in important theorems such as H\"ormander's $L^2$-estimate \cite{Hor65} and Ohsawa--Takegoshi's $L^2$-extension theorem \cite{OT87}.

On the other hand, recent research has revealed that the fact that the optimal Ohsawa--Takegoshi $L^2$-extension theorem holds itself guarantees the subharmonicity of the weight. This property is called the minimal extension property \cite{HPS18} or the optimal $L^p$-extension property \cite{DNW21,DNWZ23} in a general setting, and has been widely studied and applied by various experts (cf. \cite{GZ15,HPS18,DNW21,DNWZ23,Ina22a,Ina22b}). In other words, subharmonic functions can be characterized by the inequality part of the optimal Ohsawa--Takegoshi $L^2$-extension theorem. 
%Therefore, based on the analogy with the above, the following natural question arises:
Based on the analogy with the above, we propose the following conjecture in \cite[Appendix A]{Ina22b}: 

\begin{conjecture}\label{conj:pluriharmonic}
Let $\varphi$ be an upper semi-continuous function on a domain $\Omega\subset \C^n$. 
Then the following are equivalent: 
\begin{enumerate}
    \item $\varphi$ is pluriharmonic.
    \item $\varphi>-\infty$ and for any holomorphic cylinder $a+P_{r,s,A}$, where $(a,r,s,A)\in \Omega_{\widetilde{\delta}}$ {\rm (}see the notation below{\rm )}, there exists a unique holomorphic function $f$ on $a+P_{r,s,A}$ satisfying $f(a)=1$ and 
    \[
    \int_{a+P_{r,s,A}} |f|^2e^{-\varphi} \leq |P_{r,s,A}|e^{-\varphi(a)},
    \]
    where $|P_{r,s,A}|$ is the volume of $P_{r,s,A}$. 
\end{enumerate}
\end{conjecture}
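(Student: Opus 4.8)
I outline a proof of the equivalence under the continuity hypothesis adopted in this paper (the upper semi-continuous case being left open). The implication $(1)\Rightarrow(2)$ is the elementary one. If $\varphi$ is pluriharmonic it is real-analytic, and on each cylinder $a+P_{r,s,A}$ -- which a complex-affine change of coordinates of unit Jacobian carries to a polydisc, so that it is simply connected -- we may write $\varphi=\Rea h$ with $h$ holomorphic. Then $f:=e^{(h-h(a))/2}$ is holomorphic, $f(a)=1$, and $|f|^2e^{-\varphi}\equiv e^{-\varphi(a)}$, so $\int_{a+P_{r,s,A}}|f|^2e^{-\varphi}=|P_{r,s,A}|e^{-\varphi(a)}$ and the bound holds with equality. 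For uniqueness one checks that $|P_{r,s,A}|e^{-\varphi(a)}$ is precisely $1/K_\varphi(a)$ for the weighted Bergman kernel of the cylinder: the transformation law for Bergman kernels under multiplying the weight by $|e^{-h/2}|^2$, together with the fact that the unweighted Bergman kernel of a polydisc at its centre equals the reciprocal of the volume, gives $K_\varphi(a)=e^{\varphi(a)}/|P_{r,s,A}|$. Since the minimiser of a Hilbert-space norm on an affine hyperplane is unique, $f$ is the only competitor attaining the bound.

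For $(2)\Rightarrow(1)$ I first reformulate $(2)$. Writing $m_{P}:=\min\{\int_{a+P_{r,s,A}}|g|^2e^{-\varphi}:g(a)=1\}$, the set of competitors meeting the bound is empty, a singleton, or infinite according as $m_{P}$ exceeds, equals, or is less than $|P_{r,s,A}|e^{-\varphi(a)}$; hence $(2)$ is equivalent to the family of identities $K_\varphi^{a+P_{r,s,A}}(a)=e^{\varphi(a)}/|P_{r,s,A}|$ over all admissible $(a,r,s,A)$. In particular the optimal extension from a point exists over every admissible cylinder, so the minimal extension property \cite{HPS18,DNW21,DNWZ23} makes the upper semi-continuous regularisation $\varphi^{*}$ plurisubharmonic; continuity forces $\varphi=\varphi^{*}$, so $\varphi$ is plurisubharmonic.

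It remains to upgrade plurisubharmonicity to pluriharmonicity, and this is the real content. Since $\varphi$ is continuous, it is pluriharmonic as soon as its restriction to every complex line is harmonic (equivalently, as soon as both $\varphi$ and $-\varphi$ are plurisubharmonic). The class of cylinders being rich enough to single out an arbitrary complex direction (this is the role of the parameter $A$), it suffices to prove harmonicity along, say, the lines parallel to $e_{n}$; I would obtain this by degenerating a polydisc $D(\rho_{1})\times\cdots\times D(\rho_{n-1})\times D(\rho_{n})$ as $\rho_{1},\dots,\rho_{n-1}\to0$. By uniform continuity the weight $e^{-\varphi}$ on such a thin polydisc is close to the $z'$-independent weight $e^{-\varphi(a',\,\cdot\,)}$, whose Bergman space splits as a tensor product of the flat spaces over the thin factors with the one-variable space of weight $e^{-\varphi(a',\,\cdot\,)}$; passing to the limit in $K_\varphi^{a+P}(a)=e^{\varphi(a)}/|P|$ and dividing out the explicit flat contributions yields, for $\psi:=\varphi(a',\,\cdot\,)$,
\[
K_{\psi}^{D(a_{n},\rho_{n})}(a_{n})=\frac{e^{\psi(a_{n})}}{\pi\rho_{n}^{2}}\qquad\text{for all small discs }D(a_{n},\rho_{n}).
\]
This is the equality case of the weighted Suita-type inequality on the extremal domain; its rigidity forces the subharmonic $\psi$ to be harmonic, and letting $a'$ and the direction vary finishes the proof. (Alternatively one could try to read off the vanishing of the complex Hessian of $\varphi$ directly from a strict form of Berndtsson's positivity for $a\mapsto\log K_\varphi^{a+P_{0}}(a)$ at fixed cylinder shape $P_{0}$.)

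The main obstacle is precisely this rigidity: that equality in the optimal $L^2$ extension from a point, imposed on all small domains centred at all points, forces the weight to be pluriharmonic. The difficulty is already visible in the fact one must rule out -- there are non-constant continuous positive functions $u$ with $u(a)=e^{-\varphi(a)}$ whose averages over all concentric polydiscs dominate $u(a)$, with equality over the largest, so the equality $(2)$ by itself does not collapse $u=|f|^2e^{-\varphi}$ to a constant; one genuinely has to use that $f$ is the $\varphi$-Bergman extremal and that $\varphi$ is plurisubharmonic. A secondary, technical, point is making the degeneration limit rigorous, i.e.\ controlling the weighted Bergman kernel of a polydisc collapsing in $n-1$ directions, which is a further place where continuity rather than mere upper semi-continuity of $\varphi$ enters; extending the argument to the upper semi-continuous setting (the full conjecture) would require handling weights that are unbounded below, where this comparison breaks down.
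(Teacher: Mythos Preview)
Your treatment of $(1)\Rightarrow(2)$ and your reformulation of $(2)$ as the identity $L_\varphi\equiv 1$ (equivalently $K_\varphi^{a+P}(a)=e^{\varphi(a)}/|P|$ for all admissible cylinders) coincide with the paper's, as does the deduction of plurisubharmonicity from $L_\varphi\le 1$ via the optimal $L^2$-extension property of \cite{DNW21}.

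The divergence is in the passage from $L_\varphi\ge 1$ to plurisuperharmonicity. You propose to collapse the cylinder in $n-1$ directions, pass to a limit in the weighted Bergman kernel, and then invoke a one-variable Suita-type rigidity; you yourself flag both the degeneration limit and the rigidity as unresolved obstacles, so as written the argument is incomplete. The paper sidesteps both obstacles with a direct observation that you are missing: the condition $L_\varphi\ge 1$ says that \emph{every} holomorphic $f$ with $f(a)=1$ satisfies $\int_{a+P}|f|^2e^{-\varphi}\ge |P|\,e^{-\varphi(a)}$, so one is free to choose the competitor rather than the minimiser. Testing with $f=e^{g-g(a)}$ for an arbitrary polynomial $g$ yields
\[
\frac{1}{|P|}\int_{a+P} e^{-\varphi}\,e^{2\Rea g}\ \ge\ e^{-\varphi(a)}\,e^{2\Rea g(a)}
\]
for every admissible cylinder. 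Since $\varphi$ is lower semi-continuous, $e^{-\varphi}e^{2\Rea g}$ is upper semi-continuous, and by \cite[Lemma~3.1]{DNW21} this sub-mean-value property over cylinders makes it plurisubharmonic. As $g$ ranges over all polynomials, the standard characterization of log-plurisubharmonicity (``$\log v$ is psh iff $v\,e^{2\Rea g}$ is psh for every polynomial $g$'') then gives that $-\varphi$ is plurisubharmonic. No degeneration, no one-dimensional reduction, and no Suita rigidity are needed: the exponential test functions convert $L_\varphi\ge 1$ directly into plurisuperharmonicity of $\varphi$, which is exactly the half you were unable to close.
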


% A little consideration shows that this question is not correct in its literal sense. However, by formulating it appropriately, we see that harmonic functions, or more precisely, pluriharmonic functions, can be characterized by the equality part of the optimal Ohsawa--Takegoshi $L^2$-extension. 

We proved this conjecture in the case that $\varphi$ was smooth by using the theory of the $L^2$-extension index in \cite{Ina22b}. 
Our aim of this paper is to give an affirmative answer to the above conjecture for a \textit{continuous} function $\varphi$. 
To clarify the claim of the theorem, several symbols will be prepared. 
%Let $\Omega$ be a domain in $\C^n$ and $\varphi$ be an upper semi-continuous function on $\Omega$. 
%For a point $a\in \Omega$, $\delta(a):=\mathrm{dist}(a,\partial\Omega)$. 
Let $\Delta_r=\{ z\in \C \mid |z|<r\}$, $\B^m_s=\{ z\in\C^m\mid |z|<s \}$ and $P_{r,s,A}=A(\Delta_r\times \B^{n-1}_s)$ for $r,s>0$ and $A\in \mathbf{U}(n)$, where $\mathbf{U}(n)$ is the set of all $n$-dimensional unitary groups.
Here $P_{r,s,A}$ is a holomorphic cylinder.
We let $\Omega_{\widetilde{\delta}}=\{ (a,r,s,A)\in \Omega\times\R_{>0}\times\R_{>0}\times \mathbf{U}(n)\mid a+P_{r,s,A}\subset \Omega \}$. % $\Omega'_{\widetilde{\delta}}=\{ (a,r,s,A)\in \Omega_{\widetilde{\delta}} \mid \varphi(a)> -\infty \}$
%$\Omega_{\widetilde{\delta}}=\{ (a,r,s,A)\in \Omega\setminus\varphi^{-1}(-\infty)\times\R_{>0}\times\R_{>0}\times \mathbf{U}(n)\mid a+P_{r,s,A}\subset \Omega \}$
%  and $A^2(a+P_{r,s,A}, \varphi)=\{ f\in\mathcal{O}(a+P_{r,s,A})\mid \int_{a+P_{r,s,A}} |f|^2e^{-\varphi}<+\infty \}$ for $(a,r,s,A)\in \Omega_{\widetilde{\delta}}$. 
% Here $\mathcal{O}(a+P_{r,s,A})$ is the space of all holomorphic functions on $a+P_{r,s,A}$. 
% Then we define the function space $\mathcal{OT}$ by 
% \[
% \mathcal{OT}_{a,r,s,A}=\{ f\in A^2(a+P_{r,s,A}, \varphi) \mid f(a)=1 \And \int_{a+P_{r,s,A}} |f|^2e^{-\varphi}\leq |P_{r,s,A}| e^{-\varphi(a)} \}
% \]
% for $(a,r,s,A)\in\Omega'_{\widetilde{\delta}}$. 
% This is the space of all $L^2$ holomorphic functions that satisfy the condition of the optimal Ohsawa--Takegoshi $L^2$-extension from one point. 
% By using this notion and the notation above, we can prove the following result. 
Then the main theorem can be stated as follows. 

\begin{theorem}\label{mainthm:pluriharmonic}
Let $\varphi$ be a \textbf{continuous} function on $\Omega$. Then $\varphi$ is pluriharmonic if and only if for any $(a,r,s,A)\in \Omega_{\widetilde{\delta}}$, there exists a \textit{unique} holomorphic function $f$ on $a+P_{r,s,A}$ satisfying $f(a)=1$ and 
\[
\int_{a+P_{r,s,A}} |f|^2e^{-\varphi}\leq |P_{r,s,A}| e^{-\varphi(a)}.
\]
\end{theorem}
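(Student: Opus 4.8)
The plan is to translate the extension condition into a sharp equality for a weighted Bergman kernel of the cylinder, treat the ``if'' direction by exhibiting an explicit extremal function, and treat the ``only if'' direction by combining a Jensen‑type inequality with a degeneration of the cylinders onto complex lines, so as to reduce everything to the one‑dimensional Ohsawa--Takegoshi theorem and its rigidity. Concretely, fix a cylinder $D=a+P_{r,s,A}$ with $\overline D\subset\Omega$ (the general case follows by shrinking $r$ and $s$ slightly). Since $\varphi$ is continuous it is bounded on $\overline D$, so $H:=\mathcal{O}(D)\cap L^{2}(D,e^{-\varphi}\dV)$ is a reproducing kernel Hilbert space with finite, positive kernel $K_{D}$, the evaluation at $a$ is bounded, and $1\in H$. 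By strict convexity the functional $\|g\|^{2}_{\varphi}=\int_{D}|g|^{2}e^{-\varphi}$ has a unique minimizer $f_{D}$ over the closed affine set $\{g\in H:\ g(a)=1\}$, with $\|f_{D}\|^{2}_{\varphi}=1/K_{D}(a,a)$. Hence, for $C>0$, the set $\{g\in H:\ g(a)=1,\ \|g\|^{2}_{\varphi}\le C\}$ is empty if $C<1/K_{D}(a,a)$, equals $\{f_{D}\}$ if $C=1/K_{D}(a,a)$, and is infinite if $C>1/K_{D}(a,a)$ (perturb $f_{D}$ by a small multiple of any nonzero holomorphic function vanishing at $a$). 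Therefore the condition of the theorem for $D$ is \emph{equivalent} to the identity $K_{D}(a,a)\,|P_{r,s,A}|=e^{\varphi(a)}$, and I will prove both implications in this form; note that this identity, like pluriharmonicity, is insensitive to replacing $\varphi$ by $\varphi+2\,\mathrm{Re}\,h$ with $h$ holomorphic.

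For the ``if'' direction, assume $\varphi$ is pluriharmonic. Since the cylinder $D$ is simply connected, write $\varphi=\mathrm{Re}\,h$ with $h$ holomorphic and $h(a)=\varphi(a)$, and set $f_{0}:=e^{(h-h(a))/2}$. Then $f_{0}(a)=1$, $f_{0}$ is zero‑free, and $|f_{0}|^{2}e^{-\varphi}\equiv e^{-\varphi(a)}$ on $D$. For any competitor $g$ with $g(a)=1$ the function $|g/f_{0}|^{2}$ is plurisubharmonic and $D$ is invariant under rotations about $a$, so the sub‑mean value inequality gives $\|g\|^{2}_{\varphi}=e^{-\varphi(a)}\int_{D}|g/f_{0}|^{2}\ge e^{-\varphi(a)}|P_{r,s,A}|$. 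Thus $f_{0}$ realizes the minimum, $1/K_{D}(a,a)=|P_{r,s,A}|e^{-\varphi(a)}$, and the required identity holds for every cylinder.

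For the ``only if'' direction, assume the identity $K_{D}(a,a)\,|P_{r,s,A}|=e^{\varphi(a)}$ for every cylinder. First, applying Jensen's inequality to $f_{D}$ with the probability measure $|P_{r,s,A}|^{-1}\dV$ on $D$, and using that $\log|f_{D}|^{2}$ is plurisubharmonic with $\log|f_{D}(a)|^{2}=0$ so that its mean over $D$ (circular about $a$) is $\ge 0$, one gets $e^{-\varphi(a)}=|P_{r,s,A}|^{-1}\|f_{D}\|^{2}_{\varphi}\ge\exp\bigl(-|P_{r,s,A}|^{-1}\int_{D}\varphi\bigr)$, i.e.\ $\varphi(a)\le|P_{r,s,A}|^{-1}\int_{D}\varphi$; letting $s\to 0$ and using continuity, $\varphi$ satisfies the sub‑mean value inequality on every affine disc through $a$, so $\varphi$ is plurisubharmonic and its restriction $u$ to any complex line $\ell$ is subharmonic. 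Next fix $\ell$ and, after an affine unitary change of coordinates, assume $\ell=\C\times\{0\}$ and $\overline{\Delta_{r}}\times\{0\}\subset\Omega$. Applying the identity to the degenerate cylinders $D_{s}:=\Delta_{r}\times\B^{n-1}_{s}$ for small $s$, and bounding $1/K_{D_{s}}(0,0)=\min\{\|g\|^{2}_{\varphi}:\ g\in\mathcal{O}(D_{s}),\ g(0)=1\}$ from above by the test function $g_{0}(z_{1})$, where $g_{0}$ is the one‑dimensional extremal on $\Delta_{r}$ for the weight $e^{-u}$, uniform continuity of $\varphi$ on a compact neighbourhood of $\overline{\Delta_{r}}\times\{0\}$ gives
\[
\frac{1}{K_{D_{s}}(0,0)}\le\int_{D_{s}}|g_{0}(z_{1})|^{2}e^{-\varphi}\le\bigl(1+o(1)\bigr)\,|\B^{n-1}_{s}|\int_{\Delta_{r}}|g_{0}|^{2}e^{-u}\qquad(s\to 0).
\]
Combining this with $K_{D_{s}}(0,0)\,|\Delta_{r}|\,|\B^{n-1}_{s}|=e^{\varphi(0)}$ and letting $s\to 0$ yields $\int_{\Delta_{r}}|g_{0}|^{2}e^{-u}\ge|\Delta_{r}|\,e^{-u(0)}$, that is, the one‑dimensional weighted Bergman kernel of $\Delta_{r}$ satisfies $K^{\mathrm{1d}}_{\Delta_{r},u}(0,0)\,|\Delta_{r}|\le e^{u(0)}$. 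The reverse inequality is exactly the optimal one‑dimensional Ohsawa--Takegoshi estimate, available since $u$ is subharmonic, so equality holds for every disc in $\ell$; by the rigidity (equality case) of that estimate, $u$ is harmonic. As $\ell$ was arbitrary, $\varphi$ and $-\varphi$ are both plurisubharmonic, hence $\ddbar\varphi=0$ and $\varphi$ is pluriharmonic.

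I expect the degeneration step to be the main obstacle: passing from the $n$‑dimensional identity to the \emph{exact} one‑dimensional equality in the limit $s\to 0$ requires controlling the weights $e^{-\varphi(z_{1},z')}$ uniformly in $z'$, and it is precisely here that continuity of $\varphi$, rather than mere upper semicontinuity, is used; this is why the argument falls short of Conjecture~\ref{conj:pluriharmonic} in full generality. The remaining points (finiteness and positivity of $K_{D}$, boundedness of point evaluation, the convexity lemma describing the sublevel set, and the invocation of the sharp one‑dimensional extension theorem with its equality case) are routine given the hypotheses and the cited results.
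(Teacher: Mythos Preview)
Your treatment of the easy direction (assuming $\varphi$ pluriharmonic) matches the paper's: write $\varphi=2\,\Rea h$ with $h$ holomorphic on the cylinder and exhibit $e^{h-h(a)}$ as the unique minimizer. Your reformulation of the hypothesis as $K_{D}(a,a)\,|P_{r,s,A}|=e^{\varphi(a)}$, i.e.\ $L_\varphi\equiv 1$, is also correct and is exactly how the paper reads the assumption. For the hard direction, your Jensen argument for plurisubharmonicity is valid, though the paper simply observes $L_\varphi\le 1$ and quotes the Deng--Ning--Wang characterization (Theorem~\ref{thm:optimall2}).

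The genuine gap is in your plurisuperharmonicity step. After degenerating the cylinders onto a line $\ell$ you obtain $K^{\mathrm{1d}}_{\Delta_r,u}(a,a)\,|\Delta_r|=e^{u(a)}$ for all discs, and then invoke ``the rigidity (equality case)'' of the optimal one-dimensional Ohsawa--Takegoshi theorem to conclude that $u=\varphi|_\ell$ is harmonic. But that rigidity statement, for a merely \emph{continuous} subharmonic weight, is precisely the $n=1$ instance of the theorem you are proving; only the smooth case had been established before (cf.\ \cite{Ina22b}). So as written the argument is circular. You flag the limit $s\to 0$ as the main obstacle, but that limit is harmless by uniform continuity; the missing ingredient is a proof of the one-dimensional equality case. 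The paper avoids this entirely via Proposition~\ref{prop:lsc}: $L_\varphi\ge 1$ means \emph{every} competitor $f$ with $f(a)=1$ satisfies $\int_{a+P_{r,s,A}}|f|^{2}e^{-\varphi}\ge|P_{r,s,A}|\,e^{-\varphi(a)}$; testing with $f=e^{g-g(a)}$ for an arbitrary polynomial $g$ shows that $e^{-\varphi+2\,\Rea g}$ satisfies the sub-mean-value inequality over every cylinder and hence is plurisubharmonic (by \cite[Lemma~3.1]{DNW21}, using lower semi-continuity of $\varphi$), and the standard criterion that $\log v$ is psh iff $v\,e^{2\,\Rea g}$ is psh for all polynomials $g$ then yields that $-\varphi$ is plurisubharmonic. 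If you were to fill your gap honestly, you would end up reproducing exactly this argument in dimension one, making the degeneration a detour.
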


Note that the above $f$ satisfies the equality 
\[
\int_{a+P_{r,s,A}} |f|^2e^{-\varphi}= |P_{r,s,A}| e^{-\varphi(a)}.
\]
For the proof, we use the terms of the $L^2$-extension index and the characterization of log-plurisubharmonicity.

% Following this formulation, the minimal extension property or the optimal $L^2$-extension property can be rephrased as follows: $\varphi$ is plurisubharmonic if and only if $\mathcal{OT}_{a,r,s,A}\neq \emptyset$ for any $(a,r,s,A)\in \Omega'_{\widetilde{\delta}}$.
% Pseudoconvexity is related to the richness of certain holomorphic functions (such as Stein manifolds).  
% The main theorem states that, under an appropriate formulation, such a thing also holds with respect to pseudoconvexity. 
% In fact, if there are many holomorphic functions that satisfy the optimal Ohsawa-Takegoshi condition, the metric becomes plurisubharmonic, and if there is only one such holomorphic function, the metric becomes pluriharmonic.
% The key concept that is crucial to the proof of the theorem is the $L^2$-extension index, which was introduced by the author in \cite{Ina22b}.

% The purposes of this paper are the following:
% \begin{itemize}
% \item extend the notion of $L^2$-extension index, which was defined only for smooth functions on one-dimensional domains, to upper semi-continuous functions on domains of general dimensions. This is done under its own independent interest.
% \item apply the notion above to characterize pluriharmonic functions.
% \end{itemize}

\vskip3mm
{\bf Acknowledgment. } 
The author is supported by Japan Society for the Promotion of Science, Grant-in-Aid for Research Activity Start-up (Grant No. 21K20336) and Grant-in-Aid for Early-Career Scientists (Grant No. 23K12978).
He also expresses his gratitude to Dr. Wang Xu for fruitful discussions, sharing various ideas and his generous response.

\section{The proof of main theorem}

In order to prove the main result, we prepare several notions. 
First, we review the minimal extension property \cite{HPS18} or the optimal $L^2$-extension property \cite{DNW21,DNWZ23}.
In this paper, we follow the formulation of Deng, Ning, Wang and Zhou.

\begin{definition}[{\cite[Definition 1.1]{DNW21}}]\label{def:optimall2}
Let $\Omega$ be a domain in $\C^n$ and $\varphi$ be an upper semi-continuous function. 
Then we say that $\varphi$ satisfies {\it the optimal $L^2$-extension property} if for any $(a,r,s,A)\in \Omega_{\widetilde{\delta}}$, there exists a holomorphic function $f$ on $a+P_{r,s,A}$ such that $f(a)=1$ and 
\[
\int_{a+P_{r,s,A}} |f|^2e^{-\varphi}\leq |P_{r,s,A}|e^{-\varphi(a)}.
\]
\end{definition}

The following theorem is important in relation to this definition.

\begin{theorem}[{\cite[Theorem 1.6]{DNW21}}]\label{thm:optimall2}
    Keep the setting. 
    Then $\varphi$ is plurisubharmonic if and only if $\varphi$ satisfies the optimal $L^2$-extension property. 
\end{theorem}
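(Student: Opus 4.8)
The plan is to treat the two implications separately, since they have very different characters. The forward implication---that a plurisubharmonic weight satisfies the optimal $L^2$-extension property---is essentially the optimal Ohsawa--Takegoshi extension theorem, whereas the converse is where the genuinely new input lies and is the part I would carry out by hand.

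For the forward direction, after translating the center to the origin and using a unitary change of coordinates (both of which preserve the weighted $L^2$-norm and the volume), one may assume $a=0$ and $A=\Id$, so that $a+P_{r,s,A}=\Delta_r\times\B^{n-1}_s$. The minimal-norm holomorphic function $f$ with $f(0)=1$ is $f(z)=K(z,0)/K(0,0)$, where $K$ is the weighted Bergman kernel for $e^{-\varphi}$ on the cylinder, and it satisfies $\int_{\Delta_r\times\B^{n-1}_s}|f|^2e^{-\varphi}=1/K(0,0)$. Thus the required estimate is equivalent to the sharp lower bound $K(0,0)\geq e^{\varphi(0)}/|P_{r,s,A}|$ on the diagonal of the Bergman kernel, which is precisely the content of the optimal $L^2$-extension theorem of B{\l}ocki and Guan--Zhou when $\varphi$ is plurisubharmonic. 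I would simply invoke this; obtaining the sharp constant is the one analytically deep ingredient, and I do not expect to reprove it here.

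For the converse, assume the optimal $L^2$-extension property and fix $(a,r,s,A)\in\Omega_{\widetilde{\delta}}$ with extension $f$. Writing $g=|f|^2e^{-\varphi}$ and applying Jensen's inequality to the normalized measure $|P_{r,s,A}|^{-1}\dV$ on $a+P_{r,s,A}$ gives
\[
-\varphi(a)\ \geq\ \log\!\left(\frac{1}{|P_{r,s,A}|}\int_{a+P_{r,s,A}}|f|^2e^{-\varphi}\right)\ \geq\ \frac{1}{|P_{r,s,A}|}\int_{a+P_{r,s,A}}\bigl(2\log|f|-\varphi\bigr),
\]
where the first inequality is the extension bound. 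Since $f$ is holomorphic, $\log|f|$ is plurisubharmonic with $\log|f(a)|=0$; using the product structure of the cylinder (the disk average first and then the ball average, each being a sub-mean-value inequality about the center) one gets $|P_{r,s,A}|^{-1}\int_{a+P_{r,s,A}}\log|f|\geq 0$. Feeding this back yields the sub-mean-value inequality
\[
\varphi(a)\ \leq\ \frac{1}{|P_{r,s,A}|}\int_{a+P_{r,s,A}}\varphi
\]
for every admissible cylinder.

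It remains to upgrade this family of cylinder averages to genuine plurisubharmonicity, and this limiting step is the main obstacle on the converse side. Fixing a unit direction $v=Ae_1$ and letting the transverse radius $s\to 0$, I would divide the last inequality by $|\B^{n-1}_s|$ and pass to the limit, so that the transverse average collapses to its value at the center and one recovers the disk sub-mean-value inequality
\[
\varphi(a)\ \leq\ \frac{1}{\pi r^2}\int_{\Delta_r}\varphi(a+z_1 v)\,\dV(z_1)
\]
along the complex line through $a$ in direction $v$. As $v$, $a$ and $r$ are arbitrary, this is exactly the characterization of plurisubharmonicity by subharmonicity on complex lines. For continuous $\varphi$ the limit is immediate by dominated convergence; for merely upper semi-continuous $\varphi$ the passage $s\to 0$ is the delicate point and would be handled by an upper-semicontinuity and regularization argument, which I expect to be the only real technical difficulty in the converse.
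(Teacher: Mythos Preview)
The paper does not contain a proof of this theorem: it is quoted verbatim from \cite[Theorem~1.6]{DNW21} and used as a black box in the proof of Theorem~\ref{mainthm:pluriharmonic}. So there is no ``paper's own proof'' to compare your attempt against.

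That said, your argument can be compared with the method of \cite{DNW21} as it is reflected in the paper's proof of Proposition~\ref{prop:lsc}. For the forward implication you correctly identify it as the sharp Ohsawa--Takegoshi/Guan--Zhou bound on the diagonal Bergman kernel; this is also how \cite{DNW21} treats it. For the converse, however, the approaches differ. The route visible in \cite{DNW21} (and reused in Proposition~\ref{prop:lsc}) is to test the extension inequality with $f=e^{g}$ for an arbitrary polynomial $g$, obtain the cylinder sub-mean-value inequality for $e^{-\varphi}e^{2\Rea g}$, upgrade that to plurisubharmonicity via \cite[Lemma~3.1]{DNW21}, and finally invoke the characterization ``$\log v$ is psh $\iff$ $ve^{2\Rea g}$ is psh for all polynomials $g$''. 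Your route instead applies Jensen's inequality together with the sub-mean-value property of $\log|f|$ to extract the cylinder sub-mean-value inequality for $\varphi$ \emph{directly}, bypassing the log-psh characterization entirely. This is a genuinely different and somewhat more elementary reduction.

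Both routes, however, land on the same residual technical problem: passing from the cylinder inequality to plurisubharmonicity for a merely upper semi-continuous $\varphi$. You flag the $s\to 0$ limit as ``the only real technical difficulty'', and that is exactly right, but your sketch (``upper-semicontinuity and regularization argument'') is not a proof. In \cite{DNW21} this step is isolated as Lemma~3.1 and carried out carefully; without either invoking that lemma or reproducing its argument, your converse is complete only for continuous $\varphi$, not for the full upper semi-continuous case asserted in the theorem.
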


Next, we explain the $L^2$-extension index introduced by the author in \cite{Ina22b}. 
Here we adopt a slightly extended definition.

\begin{definition}\label{def:L2indexgeneral}
	Let $\varphi$ be a function $\varphi\colon\Omega\to [-\infty, \infty]$.
	Then we define the \textit{$L^2$-extension index} $L_\varphi$ of $\varphi$ on $\Omega_{\widetilde{\delta}}$ as follows: for $(a,r,s,A)\in \Omega_{\widetilde{\delta}}$, if $-\infty <\varphi(a)<+\infty$,
	\begin{align*}
	L_{\varphi}(a,r,s,A):&= \frac{1}{|P_{r,s,A}|K_{P_{r,s,A}, \varphi}(a)}\\
	&=\inf \left\{ \frac{\int_{a+P_{r,s,A}}|f|^2e^{-\varphi}}{|P_{r,s,A}|e^{-\varphi(a)}} ~\middle|~ f\in A^2(a+P_{r,s,A}, \varphi) \And f(a)=1 \right\},
	\end{align*}
	if $\varphi(a)=+\infty$, $L_\varphi(a,r,s,A)=+\infty$ and if $\varphi(a)=-\infty$, $L_{\varphi}(a,r,s,A)=0$.
	Here, $K_{{P_{r,s,A}},\varphi}$ is the weighted Bergman kernel on $P_{r,s,A}$ with respect to $\varphi$, $A^2(a+P_{r,s,A}, \varphi)=\{ f\in\mathcal{O}(a+P_{r,s,A})\mid \int_{a+P_{r,s,A}} |f|^2e^{-\varphi}<+\infty \}$ and $\mathcal{O}(a+P_{r,s,A})$ is the space of all holomorphic functions on $a+P_{r,s,A}$.
\end{definition}

By using this notion, we can rephrase the optimal $L^2$-extension property as follows: if $L_\varphi \leq 1$ for an upper semi-continuous function, then $\varphi$ is plurisubharmonic. 
Our goal is to prove the next proposition that is paired with the above result. 

\begin{proposition}\label{prop:lsc}
    Let $\varphi$ be a lower semi-continuous function on a domain $\Omega\subset \C^n$ with $\varphi \not\equiv +\infty$. 
    If $L_\varphi\geq 1$, then $\varphi$ is plurisuperharmonic.
\end{proposition}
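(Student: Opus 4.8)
The plan is to read the hypothesis $L_\varphi\ge 1$ as a family of \emph{reverse} $L^2$-estimates, to collapse the cylinders onto a complex line by letting the transverse radius tend to $0$, and then to pass from an arithmetic-mean inequality to a geometric-mean (logarithmic) one by twisting the competitors with holomorphic weights; this last move is an instance of the characterization of log-subharmonicity. First I would observe that $L_\varphi\ge 1$ forces $\varphi(a)>-\infty$ at every $a\in\Omega$ (otherwise $L_\varphi(a,\cdot)=0$ by Definition~\ref{def:L2indexgeneral}), so $\varphi\colon\Omega\to(-\infty,+\infty]$ is lower semicontinuous and $\not\equiv+\infty$, and that, unwinding Definition~\ref{def:L2indexgeneral}, the hypothesis says precisely that
\[
\int_{a+P_{r,s,A}}|f|^2e^{-\varphi}\ \ge\ |P_{r,s,A}|\,e^{-\varphi(a)}
\]
for every $(a,r,s,A)\in\Omega_{\widetilde{\delta}}$ and every $f\in A^2(a+P_{r,s,A},\varphi)$ with $f(a)=1$. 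Since $\varphi$ is plurisuperharmonic exactly when $-\varphi$ is plurisubharmonic, and (as $-\varphi$ is upper semicontinuous and $\not\equiv-\infty$) this holds iff $\tau\mapsto-\varphi(a+\tau v)$ is subharmonic near $0$ for every $a\in\Omega$ and every unit vector $v$, it suffices to establish that line-by-line statement; in fact I will verify the sub-mean-value inequality for $-\varphi$ at every point of every complex line.

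For the one-variable reduction, fix $a$ and $v$, complete $v$ to a unitary matrix $A$, and feed the displayed inequality with competitors $f(z)=p(\zeta_1)$ depending only on the first coordinate $\zeta_1$ (with $p$ bounded holomorphic on a disk and $p(0)=1$; such $f$ lies in $A^2(a+P_{r,s,A},\varphi)$ because $e^{-\varphi}$ is bounded on a compact neighborhood). Carrying out the integration in the transverse variables and dividing by $|\B^{n-1}_s|$ gives $\int_{\Delta_r}|p(\zeta_1)|^2G_s(\zeta_1)\ge\pi r^2e^{-\varphi(a)}$, where $G_s(\zeta_1)$ is the average of $e^{-\varphi}$ over the transverse ball of radius $s$ about $a+\zeta_1 v$. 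Letting $s\to0$, using upper semicontinuity of $e^{-\varphi}$ (so $\limsup_{s\to0}G_s(\zeta_1)\le e^{-\varphi(a+\zeta_1 v)}$) together with the reverse Fatou lemma (the integrands being dominated by a constant times $|p|^2$), I obtain the one-dimensional inequality
\[
\int_{\Delta_r}|p(\zeta)|^2e^{-\varphi(a+\zeta v)}\ \ge\ \pi r^2e^{-\varphi(a)}
\]
for all small $r$ and all bounded holomorphic $p$ with $p(0)=1$. Running the same argument with base point $a+\tau_0 v$ gives this inequality centred at an arbitrary point of the line.

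The final step — upgrading this to subharmonicity of $u:=-\varphi(a+\cdot\,v)$ — is where I expect the real difficulty, and it is exactly a Szeg\H{o}-type characterization of log-subharmonicity; closing the arithmetic/geometric-mean gap is where the argument has content. Taking $p=e^{g}$ with $g$ holomorphic and $\Rea g$ bounded above, the last display says that $e^{2h-u}$ is subharmonic on any small disk on which $h$ is harmonic with continuous boundary values. Now fix a circle $\{|\zeta-b|=r\}$; since $-u$ is upper semicontinuous there, pick continuous $\psi_n\downarrow-u$ on the circle, and let $h_n$ be the Poisson extension of $-\tfrac12\psi_n$, so that $h_n$ is continuous up to the boundary, $2h_n-u\le0$ on the circle, and $h_n(b)=-\tfrac1{4\pi}\int_0^{2\pi}\psi_n\,d\theta$. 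Pushing the sub-mean-value inequality for the subharmonic function $e^{2h_n-u}$ out to the boundary circle — again via the reverse Fatou lemma and upper semicontinuity of $-u$ — gives $e^{2h_n(b)-u(b)}\le1$, hence $-u(b)\le\tfrac1{2\pi}\int_0^{2\pi}\psi_n\,d\theta$, and letting $n\to\infty$ yields $-u(b)\le\tfrac1{2\pi}\int_0^{2\pi}-u(b+re^{i\theta})\,d\theta$ (vacuous when $-u(b)=-\infty$). Thus $-u$ is upper semicontinuous and obeys the sub-mean-value inequality at every point, hence is subharmonic (or identically $-\infty$); as $a$ and $v$ were arbitrary, $-\varphi$ is plurisubharmonic, i.e.\ $\varphi$ is plurisuperharmonic. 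Apart from this step, the only other delicate point is the transverse limit $s\to0$, which also rests on upper semicontinuity; under the continuity hypothesis of Theorem~\ref{mainthm:pluriharmonic} both limits become routine (dominated convergence, and no approximation of $-u$ is needed).
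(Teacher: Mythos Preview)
Your argument is correct and shares the decisive idea with the paper's proof: feed $f=e^{g}$ into the hypothesis $L_\varphi\ge 1$ to obtain a sub-mean-value inequality for $e^{2\Rea g}e^{-\varphi}$, and then invoke the characterization of log-(pluri)subharmonicity (``$\log v$ is plurisubharmonic iff $ve^{2\Rea g}$ is plurisubharmonic for every $g$''). The difference lies purely in the execution. The paper stays in $n$ variables throughout: taking $g$ a polynomial and $f=e^{g}$ in $L_\varphi\ge 1$ gives
\[
\int_{a+P_{r,s,A}}|e^g|^2e^{-\varphi}\ \ge\ |P_{r,s,A}|\,|e^{g(a)}|^2e^{-\varphi(a)},
\]
which is precisely the cylinder sub-mean-value property for the upper semi-continuous function $e^{-\varphi}e^{2\Rea g}$; a direct appeal to \cite[Lemma~3.1]{DNW21} then yields that this function is plurisubharmonic, and the log-psh criterion finishes in one line. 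Your route instead first collapses the cylinders onto a complex line via $s\to0$ (using upper semicontinuity of $e^{-\varphi}$ and reverse Fatou), and then re-proves the one-dimensional Szeg\H{o}-type log-subharmonicity criterion by hand with a Poisson-extension argument. This makes your proof self-contained---you do not need \cite[Lemma~3.1]{DNW21} or the log-psh characterization as black boxes---but also longer; in particular, the transverse limit $s\to0$ is a detour that the paper avoids entirely by working with cylinder averages directly.
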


% To prove this, we prepare the following lemma. Since this lemma may be well-known for experts, we only show a sketch of the proof. 

% \begin{lemma}\label{lem:logpsh}
%     Let $v$ be a positive function. Then $\log v$ is plurisubharmonic if and only if $v e^{\Rea g}$ is plurisubharmonic for every polynomial $g$. 
% \end{lemma}

\begin{proof}
    We use the proposition that for a non-negative function $v$ with $v\not \equiv 0$, $\log v$ is plurisubharmonic if and only if $v e^{2\Rea g}$ is plurisubharmonic for every polynomial $g$. 
    We take any polynomial $g$ and any $(a,r,s,A)\in \Omega_{\widetilde{\delta}}$. We may assume that $\varphi(a)<+\infty$. 
    If $L_\varphi \geq 1$, it holds that
    \[
    \int_{a+P_{r,s,A}}|e^g|^2e^{-\varphi}\geq |P_{r,s,A}| |e^{g(a)}|^2e^{-\varphi(a)}.
    \]
    Since $-\varphi$ is upper semi-continuous, we can say that $e^{-\varphi}e^{2\Rea g}$ is plurisubharmonic (see Lemma 3.1 in \cite{DNW21}). Hence, $-\varphi$ is plurisubharmonic. 
\end{proof}

By using this proposition, we can prove Theorem \ref{mainthm:pluriharmonic}. 

\begin{proof}[Proof of Theorem \ref{mainthm:pluriharmonic}]
If $\varphi$ is pluriharmonic, take a holomorphic function $h$ on $a+P_{r,s,A}$ satisfying $2\Rea (h)=\varphi$ and use the argument in \cite[Section 5]{Ina22b}. 
Then we only prove the if part. 
Note that the assumption in Theorem \ref{mainthm:pluriharmonic} says that $L_\varphi\equiv 1$. 
	Since $L_\varphi\leq 1$ and $\varphi$ is upper semi-continuous, Theorem \ref{thm:optimall2} implies $\varphi$ is plurisubharmonic. 
	Also, since $L_\varphi\geq 1$ and $\varphi$ is lower semi-continuous, Proposition \ref{prop:lsc} implies that $\varphi$ is plurisuperharmonic.
\end{proof}

I have discussed Conjecture \ref{conj:pluriharmonic} with Wang Xu. 
On May 15, I sent him the above proof. 
Then, by using the linearity of certain minimal $L^2$-integrals, Xu showed that the assumption that $\varphi$ is lower semi-continuous is not needed and sent me the proof on May 16. 
About a month later, with Zhuo Liu, Xu consequently obtained the result that the upper semi-continuity of $\varphi$ is also unnecessary and sent the manuscript \cite{LZ23} to me on June 26. 
Although their result is literally a generalization of my theorem, they encouraged me to write this paper on my result as a first important step.
I would like to thank them for their consideration and warm encouragement.

%%%%%%%%%%%%%%%%%%%%%%%%%%%%%%%%%%%%%%%%%%%%%%%%%%%%%%
%%%%%%%%%%%%%%%%%%%%%%%%%%%%%%%%%%%%%%%%%%%%%%%%%%%%%%

%%%%%%%%%%%% References %%%%%%%%%%%%%
%%
%<Author name> is written as Initial of Given Name, and Family Name.
%<Title> is written in roman letters.
%<Journal name> should be abbreviated according to
% the MR Serials Abbreviations List of Mathematical Reviews:
% (Abbreviations of Names of Serials; http://www.ams.org/mr-database)
%For <Pages>, use 
%%

\end{document}